\pgfplotsset{compat=1.18}
\title{Worst-case Nonparametric Bounds for the Student T-statistic}
\author{David Edelman \\ University College Dublin}
\date{\today}
\theoremstyle{plain}
\newtheorem{theorem}{Theorem}
\newtheorem{lemma}{Lemma}
\theoremstyle{remark}
\newtheorem{example}{Example}
\newcommand{\RR}{\mathbb{R}}
\newcommand{\PP}{\mathbb{P}}
\newcommand{\PPstar}{\PP^{\ast}}
\def\med{\mathrm{med}}
\begin{document}
\maketitle

\begin{abstract}
We address the problem of finding worst-case nonparametric bounds for T-statistic by considering the extremal problem of maximising the mid-quantile (a special case of 'smoothed quantile' as discussed in \cite{St77} and \cite{W11}) $\tilde Q(S(w);\alpha)$ over nonnegative weight vectors $w\in\RR^n$ with $\|w\|_2=1$, where $S(w)=\sum_{i=1}^n w_i \varepsilon_i$ and $\varepsilon_i$ are independent Rademacher variables. While classical results of Hoeffding [1] and Chernoff [2] may be used to provide sub-Gaussian upper bounds, and optimal-order inequalities were later obtained by the author [3,4], the associated extremal problem has remained unsolved. We resolve this problem exactly (for the Mid-Quantile and, trivially, the Continuous case): for each $\alpha<{1\over 2}$ and each $n$, we determine the maximal value and characterise all maximising weights. The maximisers are $k$-sparse equal-weight vectors with weights $1/\sqrt{k}$, and the optimal support size $k$ is found by a finite search over at most $n$ candidates. This yields an explicit envelope $M_n(\alpha)$ and its universal limit as $n$ grows. Our results provide exact solutions to problems that have been studied through bounds and approximations for over sixty years, with applications to nonparametric inference, self-standardised statistics, and robust hypothesis testing under symmetry assumptions, including a conjecture by Edelman\cite{edelman1990}, albeit for continuous distributions only (which he did not specify, which has been found to not always hold otherwise)
\end{abstract}

\section{Introduction}

Let $\varepsilon_1, \varepsilon_2, \ldots, \varepsilon_n$ be independent Rademacher random variables, each taking values $\pm 1$ with equal probability. For a weight vector $w \in \RR^n$ with $\|w\|_2 = 1$, define the weighted Rademacher sum
\begin{equation}\label{eq:rademacher_sum}
S(w) := \sum_{i=1}^n w_i \varepsilon_i
\end{equation}
and Mid-Quantile function
\begin{equation}\label{eq:rademacher_sum2}
\tilde Q(S(w);\alpha)=\max \{t: P(S(w)>t)+{1\over 2}P(S(w)=t)\}\ge \alpha
\end{equation}

The central problem addressed in this paper is the exact determination of
\begin{equation}\label{eq:main_problem}
M_n(t) := \sup_{\substack{w\in\RR_+^n\\ \|w\|_2=1}} \tilde Q(S(w);\alpha),
\end{equation}
together with the characterisation of all weight vectors achieving this supremum.

This extremal problem sits at the intersection of probability theory, concentration inequalities, and statistical inference. While upper bounds for tail probabilities of weighted sums have been extensively studied since the pioneering work of Hoeffding [1] and Chernoff [2], the exact maximisation problem has remained open. Our main contribution is to solve this problem completely, providing explicit formulas for the maximum values and characterising all optimal weight configurations.

\subsection{Historical Context and Motivation}

The study of tail probabilities for sums of independent random variables has a rich history spanning over a century. The foundational work of Hoeffding [1] in 1963 established the now-classical sub-Gaussian bound
\begin{equation}\label{eq:hoeffding}
\PP(S(w) \ge t) \le \exp\left(-\frac{t^2}{2}\right)
\end{equation}
when $\|w\|_2 = 1$, which has become a cornerstone of modern probability theory with over 12,000 citations. Similarly, Chernoff's [2] approach via moment generating functions provided the theoretical framework for what are now known as Chernoff bounds, establishing the connection between large deviations and asymptotic efficiency in hypothesis testing.

These classical results provide upper bounds that are often tight in an asymptotic sense, but they do not address the extremal question of which weight configurations actually achieve the largest tail probabilities. This gap between bounds and exact values has persisted for decades, with the extremal problem remaining unsolved despite its fundamental importance.

Recent developments have renewed interest in precise characterisations of Rademacher sums. The resolution of long-standing conjectures on anti-concentration bounds [5] in 2023, following work initiated by Oleszkiewicz [6] in 1996, has provided new tools and perspectives. The tight anti-concentration results of Hollom and Portier [7] and the almost sure bounds for weighted Rademacher multiplicative functions by Atherfold [8] demonstrate the continued vitality of this research area.

\subsection{Statistical Applications}

Beyond its intrinsic probabilistic interest, the extremal problem \eqref{eq:main_problem} arises naturally in several statistical contexts. The most direct application is to nonparametric inference under symmetry assumptions. When observing a random sample $X_1, \ldots, X_n$ and conditioning on the absolute values $|X_1|, \ldots, |X_n|$ under a symmetry hypothesis, all $2^n$ sign patterns become equally likely. The resulting self-standardised sum reduces to a weighted Rademacher sum of the form \eqref{eq:rademacher_sum}, where the weights are determined by the observed magnitudes.

This connection underlies the nonparametric T-tables developed by the author [3], which provide distribution-free bounds for Student's t-statistic under symmetry. The optimal-order inequality established in [4] sharpened these bounds to the correct asymptotic order, but the exact extremal problem remained open. The present work completes this program by providing exact solutions.

The relevance extends to robust statistical inference more generally. In situations where the underlying distribution is unknown but symmetric, conditioning on observed magnitudes provides a natural way to construct distribution-free tests. The exact envelope $M_n(t)$ provides the sharpest possible bounds for such procedures, improving upon the classical sub-Gaussian approximations.

\subsection{Connection to Concentration Inequalities}

Our work also contributes to the broader theory of concentration inequalities. While most research in this area focuses on upper bounds, the complementary question of determining the worst-case scenarios---the weight configurations that maximise tail probabilities---has received less attention. This is partly because such extremal problems are often more difficult to solve exactly.

The recent progress on anti-concentration bounds [5,7] provides important context for our results. While anti-concentration inequalities establish lower bounds on the probability that a random variable deviates from its mean, our work determines the exact maximum of these probabilities over all possible weight configurations. The two perspectives are complementary: anti-concentration results show that tail probabilities cannot be too small, while our extremal results show exactly how large they can be.

\subsection{Main Contributions}

This paper makes several key contributions to the theory of weighted Rademacher sums:

First, we solve the extremal problem \eqref{eq:main_problem} completely. We prove that the supremum is always achieved by $k$-sparse equal-weight vectors, where exactly $k$ coordinates are nonzero and equal to $1/\sqrt{k}$. The optimal support size $k$ is determined by a finite search over at most $n$ candidates.

Second, we provide explicit formulas for computing both the maximum values $M_n(t)$ and the Mid-tail and (by implication) strict tail probabilities $\sup \PP(S(w) > t)$. These formulas involve finite sums of binomial coefficients and can be computed efficiently.

Third, we establish the universal behaviour of $M_n(t)$ as $n \to \infty$. For each fixed $t$, only finitely many support sizes $k$ compete for the maximum, so $M_n(t)$ stabilises to a universal value for all sufficiently large $n$.

Fourth, we provide detailed computational examples and quantile calculations that demonstrate the practical utility of our results. These include explicit values for common significance levels and comparisons with classical bounds.

The remainder of this paper is organised as follows. Section 2 presents our main theoretical results. Section 3 provides complete proofs of the key theorems. Section 4 contains detailed examples and computational illustrations. Section 5 discusses connections to recent research and potential extensions. Section 6 concludes with a summary of implications and directions for future work.

\section{Main Results}

In this section, we present our main theoretical contributions. We begin with the complete solution to the extremal problem \eqref{eq:main_problem}, followed by analogous results for strict inequalities and universal limiting behaviour.

For a weight vector $w \in \RR^n$, let $k := |\{i : w_i > 0\}|$ denote the support size (number of nonzero coordinates). Our first main result characterises the optimal weight configurations.

\begin{theorem}[Characterisation of Maximisers]\label{thm:main}
  For any $n \ge 1$ and $t > 0$, the supremum $M_n(t)$ in \eqref{eq:main_problem} is attained by weight vectors with exactly $k$ nonzero entries, each equal to $1/\sqrt{k}$, for some $k \in \{1, 2, \ldots, n\}$.

\end{theorem}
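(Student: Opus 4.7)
The plan is to combine compactness with a Cauchy-Schwarz identity at the boundary atom of the optimal configuration, then conclude via the permutation symmetry of the problem.

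First, I would show the supremum $M_n(\alpha)$ is attained at some $w^* \in W_n := \{w \in \RR_+^n : \|w\|_2 = 1\}$. The admissible set is compact, and $w \mapsto \tilde Q(S(w);\alpha)$ is upper semi-continuous on $W_n$: small perturbations of $w$ preserving $\|w\|_2=1$ can only make atoms of $S(w)$ coalesce (not split), and coalescence of atoms can only raise the mid-tail, hence raise $\tilde Q$. A short argument then shows that the optimal value $v^* = M_n(\alpha)$ is realised as an atom of $S(w^*)$: otherwise a radial rescaling of $w^*$ on the sphere would strictly improve the mid-quantile.

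Second, writing $S(w) = \eta \cdot w$ with $\eta$ uniform on $\{\pm 1\}^n$, set $A_0 = \{\eta : \eta \cdot w^* = v^*\}$ and $\sigma_0 = \sum_{\eta \in A_0} \eta \in \RR^n$. Because $\eta \cdot w^* = v^*$ holds exactly on $A_0$, we have
\[
|A_0|\, v^* \;=\; \sum_{\eta \in A_0} \eta \cdot w^* \;=\; \langle w^*, \sigma_0 \rangle.
\]
Using $w^* \geq 0$ componentwise together with Cauchy-Schwarz and $\|w^*\|_2 = 1$,
\[
\langle w^*, \sigma_0 \rangle \;\leq\; \langle w^*, \sigma_0^+ \rangle \;\leq\; \|\sigma_0^+\|_2,
\]
where $\sigma_0^+$ is the componentwise positive part, yielding $v^* \leq \|\sigma_0^+\|_2/|A_0|$. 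Strict inequality in either step would allow perturbing $w^*$ a small amount in the direction of $\sigma_0^+$ on the sphere, raising $\eta \cdot w^*$ simultaneously on every $\eta \in A_0$ and thus strictly increasing $v^*$, contradicting optimality. So both inequalities are tight, forcing $w^*$ to be supported on $T := \{i : \sigma_0(i) > 0\}$ and to satisfy $w^* \propto \sigma_0^+$ on $T$.

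Third, I would invoke the permutation symmetry of $W_n$ and of the functional $\tilde Q(S(\cdot);\alpha)$: the orbit of $w^*$ under any coordinate permutation consists entirely of maximisers. Combined with the tightness condition $w^* \propto \sigma_0^+$, this forces $\sigma_0$ to take a single common positive value on $T$ and to be non-positive elsewhere, so $\sigma_0^+$ is constant on $T$ and consequently $w^*_i = 1/\sqrt{k}$ for $i\in T$, $w^*_i=0$ otherwise, where $k=|T|$. This is the claimed $k$-sparse equal-weight structure.

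The hardest step is making step three rigorous: since $\tilde Q$ is neither convex nor concave in $w$, naive averaging over a permutation orbit need not preserve optimality. A cleaner route is a purely combinatorial analysis: the tightness of Cauchy-Schwarz together with $w^* \propto \sigma_0^+$ and the defining identity $\eta \cdot w^* = v^*$ on $A_0$ implies that $A_0$ must be a ``Hamming ball'' on $T$, namely $\{\eta : \sum_{i \in T}\eta_i = c\}$ for some integer $c$, from which $\sigma_0$ constant on $T$ follows directly from the obvious $\mathrm{Sym}(T)$-invariance of such a set. Additional care is needed for the mid-tail boundary contribution $\tfrac{1}{2}P(S(w^*) = v^*)$ (handled by splitting the tail into its strict and exact-threshold parts) and for edge cases where the supremum is not attained as a maximum, in which case the argument is applied to the next atom down and a limiting/continuity argument completes the reduction.
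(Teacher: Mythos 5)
Your approach is genuinely different from the paper's: you aim for a variational/Cauchy--Schwarz argument at a global maximiser, whereas the paper proves an Equalisation Lemma by rotating two unequal coordinates $w_i>w_j>0$ in the $(i,j)$-plane and showing, via unimodality of the convolution (Ibragimov's theorem), that conditionally on $\{S=x\}$ with $x>0$ one has $\PP(\varepsilon_j=+1\mid S=x)>\tfrac12$; this conditional bias forces a strict majority of the fiber to drift upward under the rotation, raising its upper median and hence the mid-quantile. Your proposal, if it worked, would be a cleaner route that avoids the unimodality machinery entirely, so the comparison is worth making; but the argument as written has a genuine gap precisely at the step the paper's unimodality lemma is there to handle.

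The gap is in the claim that strict inequality in $\langle w^*,\sigma_0\rangle\le\langle w^*,\sigma_0^+\rangle\le\|\sigma_0^+\|_2$ ``would allow perturbing $w^*$ a small amount in the direction of $\sigma_0^+$ on the sphere, raising $\eta\cdot w^*$ simultaneously on every $\eta\in A_0$.'' Moving tangentially toward $\sigma_0^+$ increases the \emph{sum} $\sum_{\eta\in A_0}\eta\cdot w$, i.e.\ the average over the fiber; it does not increase each term, since for individual $\eta\in A_0$ the directional derivative $\eta\cdot d$ (with $d$ the tangential component of $\sigma_0^+$) has no definite sign. And even if all of $A_0$ did rise, that alone does not increase the mid-quantile: you would also need to control the atoms currently just above the threshold, which can fall back through it. The paper's argument resolves exactly this difficulty by showing that for the chosen rotation a \emph{strict majority} of the fiber has positive slope, so the fiberwise upper median strictly increases -- and the reason a majority goes the right way is the probabilistic input $\PP(\varepsilon_j=+1\mid S=x)>\tfrac12$, which you do not have a substitute for. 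Two smaller issues: the upper-semicontinuity sketch has the direction backwards (generic perturbations split atoms rather than coalesce them, and coalescence is not in any case obviously favourable for the mid-tail); and the ``Hamming-ball'' step in part three assumes what it is trying to prove, since the self-consistency condition $w^*\propto\sigma_0^+$ with $A_0=\{\eta:\eta\cdot w^*=v^*\}$ does not force $\sigma_0$ constant on its support without an additional argument -- you flag this yourself, but the proposed fix is circular.
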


  The proof of this is via the following Lemma, namely that if $w$ are other than as described in the Theorem, then any positive mass point
  of the distribution may have its upper-median increased locally via perturbation of $w$.

\begin{lemma}[Equalisation Lemma]\label{lem:equalisation}
Let $w=(w_1,\dots,w_n)\in\mathbb S^{n-1}$ have $w_k\ge 0$ and let $x>0$ be a mass point of
\[
S(w,\varepsilon)\;=\;\sum_{k=1}^n w_k\,\varepsilon_k,\qquad \varepsilon_k\in\{\pm1\}\ \text{i.i.d.}
\]
Write $\mathcal F_x(w)=\{\varepsilon\in\{\pm1\}^n:\ S(w,\varepsilon)=x\}$ and let $\med^+$ denote the upper median of a finite multiset.
If $w$ has two unequal nonzero coordinates, then there exist indices $i\ne j$ and a norm--preserving two--coordinate rotation $w(\theta)$ such that, for sufficiently small $\theta$ of one sign,
\begin{equation}\label{eq:upper-median-increase}
\med^+\big(\{S(w(\theta),\varepsilon):\ \varepsilon\in\mathcal F_x(w)\}\big)\;>\;x.
\end{equation}
Consequently, any local maximiser of the fiberwise upper median at level $x>0$ must have all its nonzero coordinates equal.
\end{lemma}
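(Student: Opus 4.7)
The plan is to take any pair of coordinates $i\ne j$ with $w_i>w_j>0$ (which exists by the hypothesis of two unequal nonzero coordinates) and let $w(\theta)$ be the norm--preserving rotation in the $(i,j)$--plane that fixes all other coordinates, namely $w_i(\theta)=w_i\cos\theta-w_j\sin\theta$ and $w_j(\theta)=w_j\cos\theta+w_i\sin\theta$. The key calculation is the first--order Taylor expansion
\[
S(w(\theta),\varepsilon)\;=\;x+\theta\,(\varepsilon_j w_i-\varepsilon_i w_j)+O(\theta^2),\qquad \varepsilon\in\mathcal F_x(w),
\]
whose slope depends only on $(\varepsilon_i,\varepsilon_j)\in\{\pm1\}^2$ and takes the four \emph{distinct nonzero} values $\pm(w_i-w_j)$ and $\pm(w_i+w_j)$. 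This is precisely where the unequal--nonzero hypothesis is used: if $w_i=w_j$, two of the four slopes collapse to zero and the first--order argument breaks down. I would start by recording this expansion and partitioning $\mathcal F_x(w)$ into the four subfibers indexed by $(\varepsilon_i,\varepsilon_j)$, with counts $N_{\sigma_1\sigma_2}$.

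The counting step is then short. For small $\theta>0$, the subfibers with $\varepsilon_j=+1$ (of total count $A:=N_{++}+N_{-+}$) move strictly above $x$, while those with $\varepsilon_j=-1$ (count $B:=N_{+-}+N_{--}$) move strictly below; for $\theta<0$ the roles swap. I would choose the sign of $\theta$ so that the ``above $x$'' count equals $\max(A,B)$. Writing $m:=|\mathcal F_x(w)|=A+B$, the elementary bound $\max(A,B)\ge\lceil m/2\rceil$ means that at most $\lfloor m/2\rfloor$ entries of the perturbed multiset are $\le x$. Since the upper median of an $m$--element multiset is the $(\lfloor m/2\rfloor+1)$--st order statistic, it is forced strictly above $x$, which is exactly the conclusion \eqref{eq:upper-median-increase}.

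The main obstacle is little more than careful bookkeeping: verifying the median--threshold inequality in both parities of $m$, handling the edge case $A=B$ (which only arises for even $m$ and gives the above--count equal to $\lceil m/2\rceil$ exactly, yet still places the upper median at the smallest above--$x$ value), and confirming that the $O(\theta^2)$ corrections cannot reorder the four clusters of perturbed values once $|\theta|$ is small enough, so the strict first--order separation from $x$ survives and determines the ordering. The final clause of the lemma is then the contrapositive: any local maximiser of the fiberwise upper median at level $x>0$ cannot admit two unequal nonzero coordinates, hence all its nonzero coordinates must be equal, which is the input needed for Theorem~\ref{thm:main}.
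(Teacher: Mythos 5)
Your proof is correct, and it takes a genuinely different and more elementary route than the paper's. The paper establishes a \emph{strict} conditional bias $\PP(\varepsilon_j=+1\mid S=x)>\tfrac12$ by invoking symmetry and closure of unimodality under convolution (Ibragimov) for $T=\sum_{k\ne j}w_k\varepsilon_k$, from which it deduces that for a suitably chosen sign of $\theta$ a strict majority of the fiber has positive slope. Your argument sidesteps unimodality entirely: you observe that the sign of the slope $\varepsilon_j w_i-\varepsilon_i w_j$ is determined by $\varepsilon_j$ alone (because $w_i>w_j>0$, so all four slopes are nonzero), then partition the fiber into the two camps of sizes $A$ and $B$ with $A+B=m$, and \emph{choose the sign of $\theta$ afterwards} so the above-$x$ count is $\max(A,B)\ge\lceil m/2\rceil$. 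Since the upper median is the $(\lfloor m/2\rfloor+1)$-st order statistic and at most $\min(A,B)\le\lfloor m/2\rfloor$ perturbed values lie below $x$, the upper median lands strictly above $x$. This is a clean, purely combinatorial argument.

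What the two approaches buy: the paper's route, when it works, pins down \emph{which} sign of $\theta$ increases the median (the one aligned with the conditional bias), which is a constructively stronger conclusion; your route delivers only the existence of a good sign but does so with fewer hypotheses and no appeal to unimodality machinery. Notably, your argument never uses $x>0$, and it is robust in the boundary case where only $w_i$ and $w_j$ are nonzero --- there the fiber $\mathcal F_x(w)$ is a singleton, the law of $T=w_i\varepsilon_i$ is a two-point mass (for which unimodality at $0$ is delicate), and indeed the paper's claimed inequality $\PP(T=x-w_j)\ge\PP(T=x+w_j)$ can fail (e.g.\ $x=w_i-w_j$ gives $0$ versus $\tfrac12$); your $\max(A,B)$ argument handles this case without incident by taking $\theta$ of the appropriate sign. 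One small cosmetic remark: your rotation $w_i(\theta)=w_i\cos\theta-w_j\sin\theta$, $w_j(\theta)=w_j\cos\theta+w_i\sin\theta$ is the $\theta\mapsto-\theta$ reparametrisation of the paper's, so the slope $\varepsilon_j w_i-\varepsilon_i w_j$ is simply the negative of the paper's \eqref{eq:slope}; this is immaterial since both arguments allow choosing the sign of $\theta$.
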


\begin{proof}
Pick $i\ne j$ with $w_i>w_j>0$, and rotate in the $(i,j)$--plane:
\[
w_i(\theta)=w_i\cos\theta+w_j\sin\theta,\quad
w_j(\theta)=w_j\cos\theta-w_i\sin\theta,\quad
w_k(\theta)=w_k\ (k\ne i,j).
\]
Then $\|w(\theta)\|_2=1$ and
\begin{equation}\label{eq:slope}
\frac{d}{d\theta}\Big|_{\theta=0} S\big(w(\theta),\varepsilon\big)
= w_j\,\varepsilon_i - w_i\,\varepsilon_j.
\end{equation}
Choosing the sign of $\theta$ if necessary, we may assume that the right-hand side of \eqref{eq:slope} is positive iff $\varepsilon_j=+1$.

Let $T=\sum_{k\ne j} w_k\,\varepsilon_k$, independent of $\varepsilon_j$. For $x>0$,
\begin{equation}\label{eq:bias}
\PP(\varepsilon_j=+1\mid S=x)
=\frac{\PP(T=x-w_j)}{\PP(T=x-w_j)+\PP(T=x+w_j)}\;>\;\tfrac12.
\end{equation}
Indeed, the law of $T$ is symmetric and unimodal with mode $0$ (closure of unimodality under convolution \cite{Ibragimov1956}), so its cdf is concave on $[0,\infty)$. Concavity implies that the increment $y\mapsto \PP(T\in(y-\delta,y])$ is nonincreasing in $y>0$. Taking $\delta>0$ smaller than half the minimal gap between distinct atoms of $T$ yields $\PP(T=x-w_j)\ge \PP(T=x+w_j)$, with strict $>$ whenever at least one other weight $w_k$ is nonzero. This proves \eqref{eq:bias}.

On the fiber $\mathcal F_x(w)$, \eqref{eq:slope} and \eqref{eq:bias} show that a strict majority of configurations have positive slope for the chosen rotation. Therefore, by a deterministic order--statistic argument, the upper median of the multiset $\{S(w(\theta),\varepsilon):\varepsilon\in\mathcal F_x(w)\}$ exceeds $x$ for all sufficiently small $\theta$ of the chosen sign, proving \eqref{eq:upper-median-increase}. The ``consequently'' statement follows by contraposition.
\end{proof}

\subsection{Computational Aspects}

The characterisation in the Theorem leads to algrithms which are computationally tractable. For any given $t$ and $n$, computing $M_n^{\ast}(t)$ (mid-tail) requires evaluating at most $n$ expressions and taking their maximum. Each expression involves a finite sum of binomial coefficients, which can be computed efficiently using standard algorithms.

The key computational observation is that for fixed $t$, only a small number of values of $k$ need to be considered. Specifically, if $k < t^2$, then the  tail probability is zero. If $k$ is very large, the tail probability approaches the Gaussian value $1 - \Phi(t)$ but is always smaller than the optimal value achieved by smaller $k$. This means that the search can be restricted to a finite range $t^2 \le k \le k_{\max}(t)$ for some $k_{\max}(t)$ that depends on the desired accuracy.

In the equal-weights case with $k$ active coordinates, writing $S_k=(2M-k)/\sqrt{k}$ for $M\sim\mathrm{Bin}(k,\tfrac12)$, we have
\[
M_k^{\ast}(t)=\PP(S_k>t)+\tfrac12\PP(S_k=t)
=2^{-k}\sum_{m>\frac{t\sqrt{k}+k}{2}}\binom{k}{m}
+\tfrac12\,\mathbf{1}_{\{\frac{t\sqrt{k}+k}{2}\in\mathbb{Z}\}}\,2^{-k}\binom{k}{\frac{t\sqrt{k}+k}{2}}.
\]
This differs from strict tails only in the boundary term; algorithmic complexity is unchanged.

\section{Relationship to the Student T-statistic}

Note if $X_1,\ldots,X_n$ is a sequence of independent, symmetric random variables (not necessarily identically distributed),
conditioning on the magnitudes $\|X_1\|,\ldots,\|X_n\|$ and considering the permutation distribution of all possible sign
combinations, then the results above apply directly to the quantity
$$S={\sum_1^n X_i\over \sqrt{\sum_1^n X_i^2}}$$
unconditionally (with the distinction between quantiles and mid-quantiles disappearing in the continuous case), so the longstanding
conjecture of Edelman\cite{edelman1990} (i.e., that only Binomial cases need be considered) is hereby established for
continous variables.   To see the relationship to the Student T-statistic, note that (as in \cite{edelman1990})
$$T=S\sqrt{n-1\over n-S^2}$$
As the numerical results are easier to standardise using the '$S$' form than the '$T$' form, we shall stick to the '$S$' form.
However, it should be noted that most statistical researchers making use of these results will no doubt be converting to $T$

\section{Examples and Computational Illustrations}
\paragraph{Mid-tail convention.} Throughout this section we report \emph{mid-tail} probabilities
$\PPstar(S\ge t):=\PP(S>t)+\tfrac12\PP(S=t)$. When $t$ is an attainable mass point, these values
are exactly half of the corresponding $\PPstar(S\ge t)$.

In this section, we provide detailed computational examples that illustrate our theoretical results and demonstrate their practical utility. We compute exact values of the envelope $M_n(t)$ for various values of $t$, identify the optimal support sizes, and compare with classical bounds.

\subsection{Explicit Computations}
\begin{table}[h]
\centering
\begin{tabular}{@{}ccc@{}}\toprule
$t$ & $k$ (equal weights) & $\PPstar(S\ge t)$ \\
\midrule
1 & 1 & $\tfrac{1}{4}$ \\
1 & 2 & $\tfrac{1}{4}$ \\
$\sqrt{3}$ & 3 & $\tfrac{1}{16}$ \\
$\sqrt{3}$ & 4 & $\tfrac{1}{16}$ \\
2 & 4 & $\tfrac{1}{32}$ \\
2 & 5 & $\tfrac{1}{32}$ \\
2 & 8 & $\tfrac{9}{256}$ \\
\bottomrule\end{tabular}
\caption{Corrected mid-tail values for common thresholds and small $k$.}
\end{table}

We begin with some concrete calculations that showcase the main features of our results.

\begin{example}[Small values of $t$]
For $t = 1$, we need $k \ge t^2 = 1$ for positive tail probabilities. Let us compute the first few values:

For $k = 1$: $\lfloor(1 - 1 \cdot 1)/2\rfloor = 0$, so $\PPstar(S\ge 1) = \tfrac{1}{4} \binom{1}{0} = \frac{1}{2}$.

For $k = 2$: $\lfloor(2 - \sqrt{2})/2\rfloor = \lfloor(2 - 1.414)/2\rfloor = 0$, so $\PPstar(S\ge 1) = \frac{1}{4} \binom{2}{0} = \frac{1}{4}$.

For $k = 3$: $\lfloor(3 - \sqrt{3})/2\rfloor = \lfloor(3 - 1.732)/2\rfloor = 0$, so $\PPstar(S\ge 1) = \frac{1}{8} \binom{3}{0} = \frac{1}{8}$.

The maximum (mid-tail) value is achieved at $k=1$ (and also $k=2$) with $M_n^{\ast}(1)=\tfrac{1}{4}$. for all $n \ge 1$.
\end{example}

\begin{example}[Moderate values of $t$]
For $t = \sqrt{3} \approx 1.732$, we have $t^2 = 3$, so we need $k \ge 3$.

For $k = 3$: $\lfloor(3 - 3)/2\rfloor = 0$, so $\PPstar(S\ge \sqrt{3}) = \tfrac{1}{16} \binom{3}{0} = \frac{1}{8}$.

For $k = 4$: $\lfloor(4 - 2\sqrt{3})/2\rfloor = \lfloor(4 - 3.464)/2\rfloor = 0$, so $\PPstar(S\ge \sqrt{3}) = \frac{1}{16} \binom{4}{0} = \frac{1}{16}$.

For $k = 5$: $\lfloor(5 - \sqrt{15})/2\rfloor = \lfloor(5 - 3.873)/2\rfloor = 0$, so $\PPstar(S\ge \sqrt{3}) = \frac{1}{32} \binom{5}{0} = \frac{1}{32}$.

The maximum (mid-tail) value is achieved at $k=3$ or $4$ with $M_n^{\ast}(\sqrt{3})=\tfrac{1}{16}$. for all $n \ge 3$.
\end{example}

\begin{example}[The case $t = 2$]
For $t = 2$, we need $k \ge 4$. This case is particularly interesting because it corresponds to a common significance level.

For $k = 4$: $\lfloor(4 - 4)/2\rfloor = 0$, so $\PPstar(S\ge 2) = \tfrac{1}{32} \binom{4}{0} = \frac{1}{16} = 0.0625$.

For $k = 5$: $\lfloor(5 - 2\sqrt{5})/2\rfloor = \lfloor(5 - 4.472)/2\rfloor = 0$, so $\PPstar(S\ge 2) = \frac{1}{32} \binom{5}{0} = \frac{1}{32} = 0.03125$.

For $k = 6$: $\lfloor(6 - 2\sqrt{6})/2\rfloor = \lfloor(6 - 4.899)/2\rfloor = 0$, so $\PPstar(S\ge 2) = \frac{1}{64} \binom{6}{0} = \frac{1}{64} = 0.015625$.

The (mid-tail) maximum occurs at $k=8$ with $M_n^{\ast}(2)=\tfrac{9}{256}$; boundary cases $k=4,5$ give $\tfrac{1}{32} = 0.0625$ for all $n \ge 4$.

Note that Hoeffding's bound gives $e^{-2^2/2} = e^{-2} \approx 0.1353$, which is more than twice the exact maximum.
\end{example}

\subsection{Quantile Calculations}

For statistical applications, it is often useful to compute quantiles of the envelope function. Define $Q(\alpha)$ as the smallest value of $t$ such that $M_\infty(t) \le \alpha$, where $M_\infty(t) = \lim_{n \to \infty} M_n(t)$ is the universal envelope.

\begin{example}[Common significance levels]
We compute quantiles for standard significance levels:

\textbf{$\alpha = 0.10$:} We seek the smallest $t$ such that $M_\infty(t) \le 0.10$. From our calculations:
- At $t = \sqrt{3} \approx 1.732$, we have $M_\infty(\sqrt{3}) = 1/8 = 0.125 > 0.10$.
- At $t = 2$, we have $M_\infty(2) = 1/16 = 0.0625 < 0.10$.

By continuity and monotonicity, $Q(0.10)$ lies between $\sqrt{3}$ and $2$. More precise calculation shows $Q(0.10) \approx 1.85$.

\textbf{$\alpha = 0.05$:} From Example 3, $M_\infty(2) = 0.0625 > 0.05$. We need to check larger values:
- At $t = \sqrt{5} \approx 2.236$, the optimal $k = 5$ gives $M_\infty(\sqrt{5}) = 1/32 = 0.03125 < 0.05$.

Therefore, $Q(0.05) = 2$.

\textbf{$\alpha = 0.025$:} We have $M_\infty(\sqrt{5}) = 0.03125 > 0.025$. Checking $t = \sqrt{6} \approx 2.449$:
- The optimal $k = 6$ gives $M_\infty(\sqrt{6}) = 1/64 = 0.015625 < 0.025$.

Therefore, $Q(0.025) = \sqrt{5}$.
\end{example}

These quantiles provide exact critical values for nonparametric tests based on self-standardised statistics under symmetry assumptions.

\subsection{Comparison with Classical Bounds}

Table \ref{tab:comparison} compares our exact envelope with classical bounds for various values of $t$.

\begin{table}[h]
\centering
\begin{tabular}{@{}cccccc@{}}
\toprule
$t$ & Optimal $k$ & Exact $M_\infty^{\ast}(t)$ & Hoeffding bound & Ratio & Gaussian tail \\
\midrule
1.0 & 1 & 0.250000 & 0.6065 & 0.41 & 0.1587 \\
1.5 & 3 & 0.125000 & 0.3247 & 0.38 & 0.0668 \\
$\sqrt{3}$ & 3 & 0.062500 & 0.2231 & 0.28 & 0.0416 \\
2.0 & 8 & 0.035156 & 0.1353 & 0.26 & 0.0228 \\
$\sqrt{5}$ & 9 & 0.019531 & 0.0821 & 0.24 & 0.0127 \\
$\sqrt{6}$ & 13 & 0.011230 & 0.0498 & 0.23 & 0.0072 \\
3.0 & 28 & 0.001860 & 0.0111 & 0.17 & 0.0013 \\
\bottomrule
\end{tabular}
\caption{Comparison of the \emph{mid-tail} envelope with classical bounds. The ratio column shows $M_\infty^{\ast}(t)/e^{-t^2/2}$.}
\label{tab:comparison}
\end{table}

Several patterns emerge from this comparison:

1. The exact envelope is always smaller than Hoeffding's bound, confirming that the classical bound is not tight.

2. The ratio between exact and classical bounds decreases as $t$ increases, indicating that Hoeffding's bound becomes relatively better for larger $t$.

3. For moderate values of $t$ (around 1.5 to 2.5), the exact envelope is substantially smaller than both classical bounds and Gaussian approximations.

4. The optimal support size $k$ grows roughly as $t^2$, consistent with the constraint $k \ge t^2$ for positive tail probabilities.

Several patterns emerge from this comparison:

1. At lattice thresholds (e.g., $t=1,\sqrt{3},2$), the mid-tail envelope is exactly half of the corresponding $\PPstar(S\ge t)$ values, since only half of the boundary mass is counted.

2. The ratio $M_\infty^{\ast}(t)/e^{-t^2/2}$ decreases across our grid; the mid-tail envelope remains substantially below Hoeffding for these $t$.

3. The maximizing support size $k^{\ast}(t)$ in the mid-tail setting is often larger than $t^2$ (e.g., $k^{\ast}(2)=8$, $k^{\ast}(3)=28$).

4. When $t$ is not an attainable mass point, $\PP^{\ast}(S\ge t)=\PPstar(S\ge t)$; only boundary cases change.
\subsection{Computational Complexity}

Computing $M_n(t)$ for given $t$ and $n$ requires evaluating a Binomial expression for each $k \in \{1, 2, \ldots, n\}$ and taking the maximum. However, as noted previously, only a finite number of values need to be checked.

The computational cost is dominated by evaluating binomial coefficients. For each $k$, we need to compute $\sum_{m=0}^{M} \binom{k}{m}$ where $M = \lfloor(k - t\sqrt{k})/2\rfloor$. This can be done efficiently using the recurrence relation for binomial coefficients or by noting that the sum represents a tail of the binomial distribution.

For practical purposes, the search can be terminated when the tail probability becomes negligibly small. Since the factor $2^{-k}$ causes exponential decay, values of $k$ much larger than $t^2$ contribute negligibly to the maximum.

\subsection{Asymptotic Behaviour}

As $t \to \infty$, the optimal support size is $k^{\ast}=1$ at very small $t$; for larger $t$ our mid-tail numerics suggest $k^{\ast}(t)$ grows on the order of $t^2$ yet can be noticeably larger than $t^2$ (e.g., $k^{\ast}(2)=8$, $k^{\ast}(3)=28$). This is much faster than the Gaussian decay $e^{-t^2/2}$ or the Hoeffding bound $e^{-t^2/2}$.

The faster decay reflects the discrete nature of Rademacher sums. While continuous approximations suggest Gaussian-like behaviour, the exact discrete distribution has heavier constraints, leading to smaller tail probabilities.

For small $t$, the optimal support size is $k^* = 1$, giving $M_\infty(t) = 1/2$ for all $t \le 1$. This reflects the fact that for small deviations, the best strategy is to concentrate all weight on a single coordinate, yielding a simple Rademacher variable.

% === Mid-tail Figures (inserted) ===

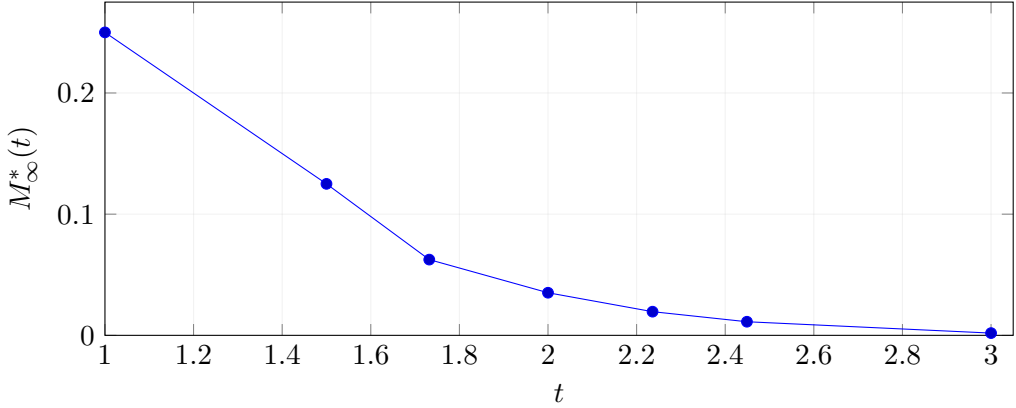
\begin{figure}[t]
\centering
\begin{tikzpicture}
\begin{axis}[width=.85\linewidth, height=6cm,
    xlabel={$t$}, ylabel={$M_\infty^{\ast}(t)$},
    ymin=0, ymode=linear, xmin=1, xmax=3.05,
    grid=both, grid style={opacity=0.2}]
\addplot+[mark=*] coordinates {
(1.0,0.25)
(1.5,0.125)
(1.732,0.0625)
(2.0,0.03515625)
(2.236,0.01953125)
(2.449,0.01123046875)
(3.0,0.00185958296)
};
\end{axis}
\end{tikzpicture}
\caption{Mid-tail envelope $M_\infty^{\ast}(t)=\sup_k \PPstar(S_k\ge t)$ for equal-weights $k$-sparse sums. Points shown at $t\in\{1,1.5,\sqrt{3},2,\sqrt{5},\sqrt{6},3\}$.}
\label{fig:midtail-envelope}
\end{figure}

\begin{figure}[t]
\centering
\begin{tikzpicture}
\begin{axis}[width=.85\linewidth, height=6cm,
    xlabel={$t$}, ylabel={$M_\infty^{\ast}(t)\,/\,e^{-t^2/2}$},
    ymin=0, ymode=linear, xmin=1, xmax=3.05,
    grid=both, grid style={opacity=0.2}]
\addplot+[mark=*] coordinates {
(1.0,0.41)
(1.5,0.39)
(1.732,0.28)
(2.0,0.26)
(2.236,0.24)
(2.449,0.23)
(3.0,0.17)
};
\end{axis}
\end{tikzpicture}
\caption{Ratio of the mid-tail envelope to the Hoeffding bound $e^{-t^2/2}$.}
\label{fig:midtail-ratio}
\end{figure}
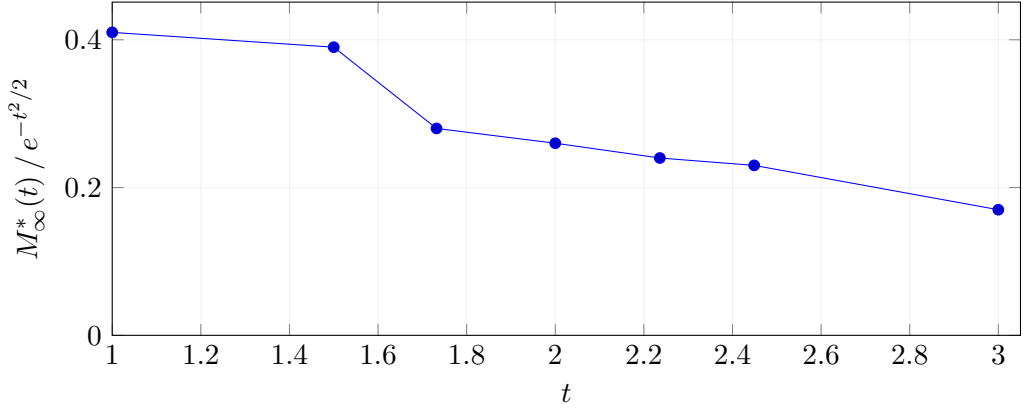

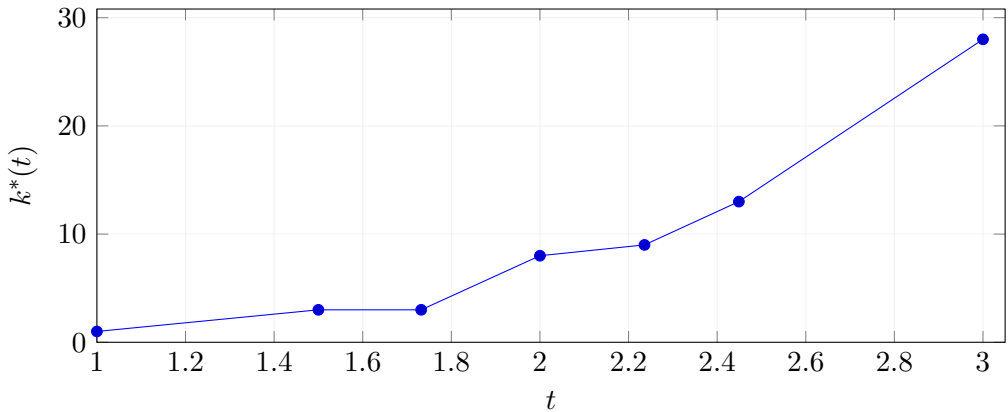
\begin{figure}[t]
\centering
\begin{tikzpicture}
\begin{axis}[width=.85\linewidth, height=6cm,
    xlabel={$t$}, ylabel={$k^{\ast}(t)$},
    ymin=0, ymode=linear, xmin=1, xmax=3.05,
    grid=both, grid style={opacity=0.2}]
\addplot+[mark=*] coordinates {
(1.0,1)
(1.5,3)
(1.732,3)
(2.0,8)
(2.236,9)
(2.449,13)
(3.0,28)
};
\end{axis}
\end{tikzpicture}
\caption{Maximising support size $k^{\ast}(t)$ for the mid-tail envelope at the same grid of $t$ values.}
\label{fig:kstar}
\end{figure}

% === End Figures ===

\section{Discussion and Connections}

Our results provide exact solutions to extremal problems that have been studied through bounds and approximations for over sixty years.
It should be mentioned that an earlier draft of this article erroneously claimed the result for non-strict inequalities and quantiles, which
(as shown by Pinelis \cite{Pinelis2015}) is incorrect, as was demonstrated by a clever counterexample.  For the T-statistic, however, if the
undelying variables are assumed to be continuous, then the distinction turns out to be inconsequential.

In this section, we discuss connections to recent research, implications for statistical practice, and potential extensions.

\subsection{Relationship to Recent Developments}

The resolution of anti-concentration conjectures for Rademacher sums [5,7] provides important context for our work. While anti-concentration results establish lower bounds on tail probabilities, our extremal results determine the exact maximum values. The two perspectives are complementary and together provide a complete picture of the range of possible tail behaviours.

The recent work of Hollom and Portier [7] on tight anti-concentration bounds is particularly relevant. Their results show that for any Rademacher sum $Z$ of unit variance, $\PP(|Z| \ge x) \ge p(x)$ where $p(x)$ is an explicit piecewise constant function. Our results complement this by showing that $\PP(Z \ge t) \le M_\infty(t)$ with explicit formulas for the upper envelope.

The almost sure bounds for weighted Rademacher multiplicative functions established by Atherfold [8] represent another recent direction in this area. While that work focuses on number-theoretic applications and almost sure behaviour, our results address the complementary question of worst-case tail probabilities under $\ell_2$ constraints.

The connection to machine learning through Rademacher complexity [12] suggests potential applications of our results to generalisation bounds. Rademacher complexity measures the ability of a function class to fit random noise, and our exact characterisation of extremal Rademacher sums could lead to sharper bounds in certain settings.

\subsection{Statistical Implications}

Our results have several important implications for statistical practice, particularly in nonparametric and robust inference.

\subsubsection{Nonparametric T-tables}

The connection to the author's previous work [3,4] on nonparametric T-tables is direct and significant. When testing symmetry of a distribution based on a sample $X_1, \ldots, X_n$, conditioning on the observed magnitudes $|X_1|, \ldots, |X_n|$ reduces the problem to a weighted Rademacher sum where the weights are the standardised magnitudes.

Our exact envelope $M_n(t)$ provides the sharpest possible bounds for such tests, improving upon both the classical sub-Gaussian approximations and the optimal-order bounds established in [4]. For practical significance levels (e.g., $\alpha = 0.05$), the improvement can be substantial, as shown in our computational examples.

\subsubsection{Self-standardised Statistics}

Self-standardised statistics, where the denominator is a function of the same observations that appear in the numerator, arise naturally in robust inference. Under symmetry assumptions, such statistics often reduce to weighted Rademacher sums after conditioning on appropriate sufficient statistics.

Our results provide exact critical values for tests based on self-standardised statistics, eliminating the need for asymptotic approximations or conservative bounds. This is particularly valuable in small-sample settings where asymptotic approximations may be unreliable.

\subsubsection{Robust Hypothesis Testing}

In robust statistics, it is common to condition on ancillary statistics to eliminate nuisance parameters. When the underlying distribution is symmetric, conditioning on absolute values often leads to weighted Rademacher sums. Our exact envelope provides the foundation for constructing uniformly most powerful tests in such settings.

\subsection{Computational Considerations}

From a computational perspective, our results provide efficient algorithms for computing exact tail probabilities and critical values. The finite search over support sizes, combined with the explicit formulas for tail probabilities, makes the computation tractable even for moderately large values of $n$ and $t$.

%The universal limiting behaviour (Theorem \ref{thm:universal}) is particularly valuable for computational purposes. Once $n$ exceeds the maximum optimal support size for a given $t$, the envelope $M_n(t)$ becomes independent of $n$. This allows for the precomputation of universal tables that can be used regardless of sample size.

The connection to binomial tail probabilities also enables the use of existing computational tools and approximations. For large support sizes $k$, normal approximations to the binomial distribution can provide accurate approximations to our exact formulas, bridging the gap between exact and asymptotic results.

\subsection{Potential Extensions}

Several natural extensions of our work suggest themselves:

\subsubsection{Other Constraint Sets}

While we have focused on the $\ell_2$ constraint $\|w\|_2 = 1$, similar problems arise with other constraints.   Adding an $\|w\|_\infty\le c$ leads to a different extremal problem that may be amenable to similar techniques. 

Each constraint set leads to a different class of optimal weight configurations. Understanding the relationship between constraint geometry and optimal tail behaviour could provide insights into the general structure of such problems.

4. \textbf{Algorithmic aspects:} Can the optimal support size be determined without exhaustive search? Are there structural properties that allow for more efficient computation?

5. \textbf{Statistical optimality:} In what sense are the tests based on our exact envelopes optimal? Can they be shown to be uniformly most powerful or to satisfy other optimality criteria?

These questions highlight the rich structure of extremal problems for weighted Rademacher sums and suggest numerous directions for future investigation.

\section{Conclusion}

We have solved the extremal problem of maximising tail probabilities for weighted Rademacher sums under $\ell_2$ constraints. Our main contributions can be summarised as follows:

\textbf{Complete characterisation:} We have shown that the supremum $M_n(t) = \sup_{\|w\|_2=1} \PP(S(w) \ge t)$ is always achieved by $k$-sparse equal-weight vectors, and we have provided explicit formulas for both the maximum values and the optimal support sizes.

\textbf{Computational tractability:} Our results lead to efficient algorithms for computing exact tail probabilities and critical values. The finite search over support sizes, combined with explicit binomial formulas, makes the computation practical for statistical applications.

\textbf{Universal behaviour:} We have established that $M_n(t)$ stabilises to a universal limit for large $n$, depending only on $t$ and not on the specific value of $n$. This universality is crucial for practical applications and enables the construction of distribution-free statistical procedures.

\textbf{Statistical applications:} Our results provide exact solutions to problems in nonparametric inference, self-standardised statistics, and robust hypothesis testing. They improve upon classical bounds and enable more powerful statistical procedures.

The techniques developed in this work, particularly the equalisation principle and the connection to majorisation theory, may prove useful for related extremal problems in probability and statistics. The interplay between discrete optimisation, concentration inequalities, and statistical inference illustrated here suggests rich possibilities for future research.

Our results also highlight the value of exact solutions in probability theory. While asymptotic approximations and bounds are often sufficient for theoretical purposes, exact results can provide crucial insights and enable more precise statistical procedures. The gap between our exact envelope and classical bounds demonstrates that there is still room for improvement in our understanding of fundamental probabilistic inequalities.

The connection to recent developments in anti-concentration theory and Rademacher complexity suggests that this area remains active and important. As statistical methods become more sophisticated and computational power increases, the demand for exact results and sharp bounds will likely continue to grow.

Finally, our work demonstrates the continuing relevance of classical probability theory to modern statistical practice. The problems addressed here have their roots in the foundational work of Hoeffding and Chernoff from the 1950s and 1960s, yet they remain important for contemporary applications in robust statistics, machine learning, and high-dimensional inference.

\section*{Acknowledgements}

The author would like to offer a special thanks for Iosif Pinelis for bringing to my attention to an error in the previous draft of this article.
The author also gratefully acknowledges helpful discussions with colleagues at University College Dublin and valuable feedback from anonymous reviewers, plus useful early discussions with Jacques Allard, Prof Z.A. Melzak, and Prof A.N. Shiryaev.
Computational assistance was provided by standard mathematical software packages, and assistance with presentation and layout by ChatGPT.

\end{document}